\documentclass[11pt]{article}
\usepackage{amsmath,amsthm,amsfonts,amscd,amssymb,eucal,latexsym,mathrsfs, appendix, yhmath, fancyhdr, setspace,url}
\usepackage[numbers,sort&compress]{natbib}
\usepackage[all,cmtip]{xy}
\usepackage{abstract}
\setlength{\textwidth}{15cm}
\setlength{\oddsidemargin}{4mm}
\setlength{\evensidemargin}{4mm}

\newtheorem{theorem}{Theorem}[section]

\newtheorem{lemma}[theorem]{Lemma}
\newtheorem{proposition}[theorem]{Proposition}

\theoremstyle{definition}
\newtheorem{definition}[theorem]{Definition}
\newtheorem{remark}[theorem]{Remark}

\allowdisplaybreaks

\begin{document}

\title{Fibred cofinitely-coarse embeddability of box families\\ and proper isometric affine actions on\\ uniformly convex Banach spaces\footnote{The authors are supported by NSFC (Nos. 11231002, 11771061).}}

\author{Guoqiang Li and  Xianjin Wang}

\date{}

\maketitle

\begin{abstract}
In this paper we show that a countable, residually amenable group admits a proper isometric affine action on some uniformly convex Banach space if and only if one (or equivalently, all) of its box families admits a fibred cofinitely-coarse embedding into some uniformly convex Banach space.

\end{abstract}

\maketitle \numberwithin{equation}{section}
%%%%%%%%%%%%%%%%%%%%%%%%%%%%%%%%%%%%%%%%%%%%%%%%%%%%%%%%%%%%%%%%%%%%%%%%%%%%%%%%%%%%%%%%%%%%%%%%%%%%%%%
\section{Introduction}
The Haagerup property, also viewed as Gromov's definition of {\it a-T-menability} (definition in terms of isometric affine actions, see \cite{Gromov}), was first introduced in \cite{Haagerup} by Haagerup. Considered as a weak form of von Neumann's amenability and a strong negation of Kazhdan's property $(T)$, the Haagerup property has significant applications in many fields of mathematics, including representation theory, $K$-theory, the Baum-Connes conjecture, harmonic analysis, etc. For example, Higson and Kasparov in their paper \cite{Higson} established the strong Baum-Connes conjecture for groups with the Haagerup property. Specifically, a (discrete) group $G$ is called a-T-menable or has the Haagerup property if there exists a proper action by affine isometries of $G$ on a Hilbert space. The notion of Haagerup property naturally extends to proper isometric affine action on Banach spaces. In recent years, great progress has been made about isometric affine actions on Banach spaces, and particularly in the case of $L^p$ spaces for fixed $1\leq p\leq\infty$. For more on this, see \cite{Arnt}, \cite{Bader}, \cite{Chatterji}, \cite{Cherix1}, \cite{Yu}.

Recall that a finitely generated group $G$ is residually finite if there exists a nested sequence of finite index normal subgroups such that the intersection of these subgroups is trivial. The box space associated to this sequence reflects in some way the structure of the residually finite group. For example, X. Chen, Q. Wang and X. Wang in their paper \cite{Wang} gave the characterization of the Haagerup property in terms of {\it fibred coarse embedding into Hilbert space}: they showed that a finitely generated, residually finite group has the Haagerup property if and only if one of its {\it box spaces} admits a fibred coarse embedding into a Hilbert space. The idea of a fibred coarse embedding into Hilbert space for metric spaces was introduced in \cite{Chen} by X. Chen, Q. Wang, and G. Yu to attack the maximal Baum-Connes conjecture. S. Arnt in \cite{Arnt} extended this result to the class of $L^p$ spaces (for a fixed $p\geq 1$). That is, a finitely generated, residually finite group has {\it the property $PL^p$} (i.e. it admits a proper isometric affine action on some $L^p$ space) if and only if one of its box spaces admits a fibred coarse embedding into some $L^p$ space. K. Orzechowski in \cite{Orzechowski} extended this result for {\it residually amenable groups} (see Definition \ref{def1} below). That is, a countable, residually amenable group has the Haagerup property if and only if one of its {\it box families} (see Definition \ref{def2} below) admits a {\it fibred cofinitely-coarse embedding} (see Definition \ref{def4} below) {\it into a Hilbert space}. Moreover, in \cite{Kasparov} G. Kasparov and G. Yu showed that groups admitting coarse embeddings into uniformly convex Banach spaces satisfy the Novikov conjecture. Inspired by the above works, we extend further the result in \cite{Wang}: for a residually amenable group $G$, we give a sufficient condition of the proper isometric affine action of $G$ on some uniformly convex Banach space by fibred cofinitely-coarse embeddability of its box families.

\begin{theorem}
A countable, residually amenable group admits a proper isometric affine action on some uniformly convex Banach space if and only if one (or equivalently, all) of its box families admits a fibred cofinitely-coarse embedding into a uniformly convex Banach space.~\label{th1}
\end{theorem}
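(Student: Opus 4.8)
The plan is to establish the two implications separately, with the ``one $\Leftrightarrow$ all'' clause appearing as a by-product: (A) a proper isometric affine action of $G$ on a uniformly convex Banach space forces \emph{every} box family of $G$ to admit a fibred cofinitely-coarse embedding into a uniformly convex Banach space, and (B) if \emph{some} box family admits such an embedding, then $G$ carries a proper isometric affine action on a uniformly convex Banach space.

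\textbf{Direction (A).} Fix a proper length function $\ell$ on $G$, a nested sequence $N_1 \supseteq N_2 \supseteq \cdots$ of normal subgroups with each $G/N_i$ amenable and $\bigcap_i N_i = \{e\}$, an exhausting sequence $S_1 \subseteq S_2 \subseteq \cdots$ of finite symmetric subsets of $G$, and let $\bigsqcup_n G/N_n$ be the associated box family with its metrics $d_n$. Let $(\pi, b)$ be the given proper isometric affine action on the uniformly convex space $X$: $\pi$ an isometric linear representation, $b(gh) = b(g) + \pi(g)b(h)$. Because the action is proper and $\ell$ is proper, $\rho_+(t) := \max\{\|b(g)\| : \ell(g) \le t\}$ and $\rho_-(t) := \inf\{\|b(g)\| : \ell(g) \ge t\}$ are finite, non-decreasing, and tend to $\infty$ as $t \to \infty$. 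Take $B = X$ and, over each coset $gN_n$, the fibre $B_{gN_n} = X$. Given $R > 0$, since $\bigcap_n N_n = \{e\}$ there is $m(R)$ such that, for $n > m(R)$, $N_n$ meets no ball of a suitable radius $C_R$ (built from $R$ and $S_n$) except in $\{e\}$; then every subset $C$ of $G/N_n$ of diameter $\le R$ lifts isometrically to $G$, and we may take the exceptional set $K_R$ to be the union of the components $G/N_n$ with $n \le m(R)$, which is cofinite in the family in the sense of Definition~\ref{def4}. On such a $C$, choose a lift and a representative $g_0$, set the trivialization $t_C(g_0 h N_n) = \pi(g_0 h)^{-1}$ on $B_{g_0 h N_n} = X$ and the section $s(g_0 h N_n) = b(g_0 h)$; then $\|t_C(g_0 h N_n)s(g_0 h N_n) - t_C(g_0 h' N_n)s(g_0 h' N_n)\| = \|b(h^{-1}h')\|$, which lies between $\rho_-$ and $\rho_+$ of $d_n(g_0 h N_n, g_0 h' N_n)$ once $d_n$ is compared with $\ell$ on the lifted ball. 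The cocycle identity for $b$ gives the required compatibility of the $t_C$ under passing to subsets. The target being $X$ itself, it is uniformly convex.

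\textbf{Direction (B).} Suppose $\bigsqcup_n G/N_n$ admits a fibred cofinitely-coarse embedding into a uniformly convex Banach space $B$, with field $(B_x)$, section $s$, trivializations, and distortion functions $\rho_\pm$. For each $R$ and all large $n$, the $R$-balls of $G/N_n$ lift isometrically to balls in $G$; pulling back the fibred data along $G \to G/N_n$ over these balls and using left-invariance together with the compatibility of the trivializations, one extracts for each $n$ a uniformly convex Banach space $E_n$ assembled from $B$, hence carrying the \emph{same} modulus of convexity $\delta_B$, a partial isometric representation $\pi_n$ and a partial cocycle $\xi_n$ defined on the ball $B_G(e, R_n)$ (with $R_n \to \infty$), such that $\xi_n(e) = 0$, $\xi_n(gh) = \xi_n(g) + \pi_n(g)\xi_n(h)$, and $\rho_-(\ell(g)) \le \|\xi_n(g)\| \le \rho_+(\ell(g))$ whenever $g, h, gh \in B_G(e, R_n)$. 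At this point amenability of the quotients is used: following \cite{Orzechowski}, an approximately invariant (Reiter/F\o lner) sequence in $G/N_n$ lets one average the pulled-back data so that these relations hold \emph{exactly} on balls rather than merely coarsely --- this averaging replaces the conditionally-negative-type-kernel argument available in the Hilbert case of \cite{Wang}. Now fix a non-principal ultrafilter $\cU$ on $\Nb$ and form the ultraproduct $E = \big(\prod_n E_n\big)_\cU$ of the bounded sequences; since all $E_n$ share the modulus $\delta_B$, the space $E$ is again uniformly convex, with the same $\delta_B$. For each $g \in G$, $\ell(g) \le R_n$ for $\cU$-almost all $n$ and then $\|\xi_n(g)\| \le \rho_+(\ell(g))$ is bounded in $n$; set $\xi(g) = (\xi_n(g))_n \in E$ and $\pi(g) = (\pi_n(g))_n$, an isometry of $E$. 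Taking the ultralimit of the partial relations yields $\pi(gh) = \pi(g)\pi(h)$ and $\xi(gh) = \xi(g) + \pi(g)\xi(h)$, so $\alpha(g)v = \pi(g)v + \xi(g)$ is an isometric affine action of $G$ on $E$; it is proper since $\|\xi(g)\| = \lim_\cU \|\xi_n(g)\| \ge \rho_-(\ell(g)) \to \infty$ as $\ell(g) \to \infty$. Replacing $E$ by the closed affine span of an orbit, if a separable space is preferred, preserves uniform convexity and finishes the argument.

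\textbf{Main obstacle.} The delicate part is Direction (B). Two mechanisms must run in parallel. First, one has to pass from the fibred data to \emph{genuine} (not merely coarse) partial cocycles on balls and then regularize them using the amenability of $G/N_n$; in the Hilbert settings of \cite{Wang,Orzechowski} this is handled through conditionally negative definite kernels, and here it must be carried out directly at the level of affine actions. Second --- and this is what makes the uniformly convex conclusion possible --- the ultralimit over $n$ must be taken \emph{inside} the class of uniformly convex spaces: this is legitimate precisely because the fibred embedding lands in a \emph{single} uniformly convex space $B$, giving all the spaces $E_n$ a common modulus of convexity, so that the ultraproduct does not degenerate (an ultraproduct of uniformly convex spaces lacking a uniform modulus need not be uniformly convex). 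Making the averaging step compatible with the ultralimit --- so that the averaged data stay bounded and their defects vanish along $\cU$ --- is the heart of the proof.
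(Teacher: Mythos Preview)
Your outline follows the same two-proposition route as the paper: Direction~(A) is Proposition~\ref{pro2} and Direction~(B) is Proposition~\ref{pro1}, combined with Lemmas~\ref{lem1} and~\ref{lem2}; in particular the key mechanism in (B) --- build $r$-local cocycles from the trivializations, average over the amenable quotient to get an honest $r$-local isometric affine action on an $\ell^p$-type product of copies of $B$, then pass to an ultraproduct whose uniform convexity is guaranteed by a common modulus --- is exactly the paper's argument.

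Two places deserve tightening. In Direction~(A), your section $s(g_0hN_n)=b(g_0h)$ is not well defined on cosets: replacing the representative $g_0h$ by $kg_0h$ with $k\in N_n$ changes $b(g_0h)$ by $\pi(k)b(g_0 h)+b(k) - b(g_0 h)$, which need not vanish. The paper resolves this by taking the fibre over $[a]$ to be the orbit space $B_{[a]}=(G_na\times B)/G_n$, so that $s([a])=[(a,b(a))]$ is intrinsic; your identification of every fibre with $X$ forces an arbitrary choice of transversal and then the trivialization computation must track the discrepancy between that transversal and the local lift. In Direction~(B), the spaces $E_n$ (in the paper, $\prod_{z\in G/G_{n_r}}B$ with the $\ell^p$-norm coming from a right-invariant mean rather than a F{\o}lner average) do not literally carry the modulus $\delta_B$: Day's theorem gives them a modulus depending on $\delta_B$ and $p$. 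What matters --- and what your argument actually needs --- is that this modulus is \emph{common} to all $n$, so that Lemma~\ref{lem1} applies to the ultraproduct. With these adjustments your sketch matches the paper's proof.
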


%%%%%%%%%%%%%%%%%%%%%%%%%%%%%%%%%%%%%%%%%%%%%%%%%%%%%%%%%%%%%%%%%%%%%%%%%%%%%%%%%%%%%%%%%%%%%%%%%%%%%%%
\section{Preliminaries}
First, we recall the definition of the residually amenable group and the box family.
\begin{definition}[see \cite{Orzechowski}]
Let $G$ be a countable, finitely generated group. Then $G$ is residually amenable if there exists a sequence $(G_n)_{n\in\mathbb{N}}$ of normal subgroups in $G$, such that $G_{n+1}\subset G_n$ for every $n\in\mathbb{N}$, $\bigcap_{n\in\mathbb{N}}G_n =\{e_G\}$ and each quotient group $G/G_n$ is amenable.~\label{def1}
\end{definition}
Obviously, all residually finite groups are residually amenable.

Let $G$ be a residually amenable group and $(G_n)_{n\in\mathbb{N}}$ be a sequence of normal subgroups in $G$ satisfying the conditions of Definition \ref{def1}. Endow $G$ with a {\it proper length function} $l$ and its associated, left-invariant metric $d(g,h):=l(g^{-1}h)$ for $g,h\in G$. Then, for each $n\in\mathbb{N}$, $$l_n ([g]):=\inf\{l(gh):h\in G_n\}$$ defines a length function on the quotient group $G/G_n$. Thus, $G/G_n$ becomes a metric space with the metric $d_n$ induced by $l_n$. This way, we can obtain a family of metric spaces.
\begin{definition}[see \cite{Orzechowski}]
Let $G$ be a residually amenable group. The metric family $\{(G/G_n ,d_n):n\in\mathbb{N}\}$ is said to be the box family of $G$ corresponding to $(G_n)_{n\in\mathbb{N}}$.~\label{def2}
\end{definition}

Let $(X_n)_{n\in\mathbb{N}}$ be a sequence of metric spaces. {\it A coarse disjoint union} of $(X_n)_{n\in\mathbb{N}}$ is the disjoint union $X=\bigsqcup_{n\in\mathbb{N}}X_n$ equipped with a metric $d$ defined as follows: (1) the restriction of $d$ to each $X_n$ is the original metric of $X_n$; (2) $d(X_n,X_m)\to\infty$ as $n+m\to\infty$ and $n\neq m$. We remark that the box family $\{(G/G_n):n\in\mathbb{N}\}$ can be considered as the coarse disjoint union $\bigsqcup_{n\in\mathbb{N}}(G/G_n)$. In particular, if $G$ is a finitely generated, residually finite group, then the coarse disjoint union $\bigsqcup_{n\in\mathbb{N}}(G/G_n)$ is called the {\it box space}.

\vspace{3mm}Next we recall the definition of {\it the uniform convexity} and {\it the proper isometric affine action}.

Let $(B,\|\cdot\|_B)$ be a Banach space. {\it The modulus of convexity} of $B$ is the function $\delta_B :[0,2]\to [0,1]$ defined by
$$\delta_B (t):=\inf\bigg\{1-\left\|\frac{\xi+\xi'}{2}\right\|_{B}:\xi,\xi'\in B,\,\|\xi\|_B =\|\xi'\|_B =1,\,\|\xi-\xi'\|_B \geq t\bigg\}.$$
The Banach space $B$ is said to be uniformly convex if $\delta_B (t)>0$ for any $t>0$. Let $(B_i)_{i\in\mathbb{N}}$ be a sequence of uniformly convex Banach spaces. The sequence $(B_i)_{i\in\mathbb{N}}$ is said to have a {\it common modulus of convexity} if $\inf\limits_{i}\delta_{B_i}(t)>0$ for any $t>0$.

M. Day in \cite{Day} proved the following fact.
\begin{theorem}[see \cite{Day}]
Let $(B_i)_{i\in\mathbb{N}}$ be a sequence of Banach spaces, $1<p<\infty$ and define $$\bigoplus_{i=1}^{\infty}B_i:=\bigg\{b=(b_i):b_i \in B_i,\,\|b\|_p :=\bigg(\sum\limits_{i=1}^{\infty}\|b_i\|_{B_i}^p\bigg)^{\frac{1}{p}}<\infty\bigg\}.$$ Then $\bigoplus_{i=1}^{\infty}B_i$ is uniformly convex if and only if the $(B_i)_{i\in\mathbb{N}}$ have a common modulus of convexity.~\label{th2}
\end{theorem}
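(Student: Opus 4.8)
The plan is to prove both implications of Theorem~\ref{th2}, with all the real work in the ``if'' direction. For the ``only if'' direction, observe that each $B_i$ embeds isometrically into $B:=\bigoplus_{i=1}^{\infty}B_i$ as the closed subspace of sequences supported at the $i$-th coordinate. Since the infimum defining $\delta_{B_i}$ is taken over a subcollection of the pairs defining $\delta_{B}$ (unit vectors of the subspace are in particular unit vectors of $B$), one has $\delta_{B_i}(t)\ge\delta_{B}(t)$ for all $i$ and all $t\in(0,2]$. If $B$ is uniformly convex, then $\inf_i\delta_{B_i}(t)\ge\delta_{B}(t)>0$, so the $(B_i)_{i\in\mathbb{N}}$ have a common modulus of convexity. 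For the converse I would set $\delta_0(t):=\inf_i\delta_{B_i}(t)>0$ and aim to produce, for each $\epsilon\in(0,2]$, a number $\eta(\epsilon)>0$ with $\delta_{B}(\epsilon)\ge\eta(\epsilon)$, depending only on $p$ and $\delta_0$.

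The key is a two-scale reduction. Fix $b=(b_i),\,c=(c_i)\in B$ with $\|b\|_p=\|c\|_p=1$ and $\|b-c\|_p\ge\epsilon$, and record the scalar profiles $\alpha=(\|b_i\|_{B_i})$ and $\beta=(\|c_i\|_{B_i})$, which are unit vectors of $\ell^p$. The pointwise triangle bound $\|\tfrac{b_i+c_i}{2}\|_{B_i}\le\tfrac{\alpha_i+\beta_i}{2}$ combined with monotonicity of the $\ell^p$-norm gives the comparison $\|\tfrac{b+c}{2}\|_p\le\|\tfrac{\alpha+\beta}{2}\|_{\ell^p}$. I would then split on the size of $\|\alpha-\beta\|_{\ell^p}$. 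In the first case, when $\|\alpha-\beta\|_{\ell^p}\ge\epsilon/2$, the classical uniform convexity of $\ell^p$ for $1<p<\infty$ (Clarkson) yields $\|\tfrac{\alpha+\beta}{2}\|_{\ell^p}\le 1-\delta_{\ell^p}(\epsilon/2)$, so the midpoint of $b$ and $c$ drops by a definite amount arising purely from the outer $\ell^p$-geometry.

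In the second case, when $\|\alpha-\beta\|_{\ell^p}<\epsilon/2$, the profiles are too close for the separation $\|b-c\|_p\ge\epsilon$ to be explained by the norms, so it must come from directional differences inside the fibers. Comparing $\sum_i\|b_i-c_i\|_{B_i}^p\ge\epsilon^p$ against $\sum_i|\alpha_i-\beta_i|^p<(\epsilon/2)^p$ isolates a family of fibers on which $\|b_i-c_i\|_{B_i}$ is bounded below relative to $\max(\|b_i\|_{B_i},\|c_i\|_{B_i})$ and which carries a definite fraction of the total $\ell^p$-mass. On each such fiber the common modulus gives $\|\tfrac{b_i+c_i}{2}\|_{B_i}\le(1-\kappa)\tfrac{\alpha_i+\beta_i}{2}$ for some $\kappa=\kappa(\epsilon)>0$ depending only on $\delta_0$; summing these gains in $\ell^p$ against $\mu_i:=\tfrac{\alpha_i+\beta_i}{2}$ produces a drop $\|\tfrac{b+c}{2}\|_p\le 1-\eta(\epsilon)$. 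Taking $\eta$ to be the smaller of the two case bounds establishes $\delta_B(\epsilon)>0$ and hence uniform convexity of $B$.

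The hard part, I expect, is the fiber-wise inequality when $\|b_i\|_{B_i}\ne\|c_i\|_{B_i}$. Because $\delta_{B_i}$ is defined only for pairs of unit vectors, applying it to $b_i,c_i$ of unequal length forces a rescaling of both to the common radius $M_i=\tfrac{\alpha_i+\beta_i}{2}$, which introduces an error of size $\tfrac12|\alpha_i-\beta_i|$ per fiber. Controlling the $\ell^p$-aggregate of these errors is exactly where the smallness of $\|\alpha-\beta\|_{\ell^p}$ in the second case is used, and the delicate bookkeeping is to make the gain $\kappa(\epsilon)$ uniform in $i$: independent of how the $\ell^p$-mass is distributed across fibers and non-degenerating as individual fiber norms become small. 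Once this uniform fiber estimate is in place, the two cases assemble into the desired lower bound on $\delta_B$.
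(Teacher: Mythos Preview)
The paper does not supply a proof of this theorem at all: Theorem~\ref{th2} is quoted from Day's 1941 paper and used as a black box, so there is no argument in the paper to compare your proposal against.

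As for the proposal on its own merits: the ``only if'' direction is fine, and your two-case split on the size of $\|\alpha-\beta\|_{\ell^p}$ is the standard shape of the argument. However, what you label ``the hard part'' is not merely hard bookkeeping---it is the entire content of the theorem, and you have not actually carried it out. In the second case you assert that one can isolate a set of indices on which $\|b_i-c_i\|_{B_i}$ is bounded below by a fixed multiple of $\max(\alpha_i,\beta_i)$ and which simultaneously carries a definite fraction of the $\ell^p$-mass of $(\alpha_i+\beta_i)/2$; this does not follow just from $\sum_i\|b_i-c_i\|^p\ge\epsilon^p$ and $\sum_i|\alpha_i-\beta_i|^p<(\epsilon/2)^p$ without a further pigeonhole or layer-cake argument, and the subsequent step of turning a coordinatewise gain $(1-\kappa)$ on that index set into a uniform $\ell^p$-drop requires a quantitative inequality (of the Clarkson or Hanner type) that you have not stated. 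Day's own proof, and the cleaner modern versions, proceed via an auxiliary two-variable modulus that handles vectors of unequal norm directly, precisely to avoid the rescaling error you flag. So your outline is pointed in the right direction, but as written it is a plan rather than a proof.
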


Let $G$ a countable, amenable group. By one of the equivalent definitions of amenability, there exists {\it a finitely additive probability measure} $\mu:2^G\to [0,1]$ which is right-invariant (i.e.$\,\,\mu(Az)=\mu(A)$ for $A\subset G$, $z\in G$). Let $(B_z)_{z\in G}$ be a sequence of Banach spaces, $1<p<\infty$ and define $$\prod _{z\in G}B_z:=\bigg\{b=(b_z):b_z \in B_z,\,\|b\|_p:=\left(\int_G \|b_i\|_{B_z}^p \,d\mu(z)\right)^{\frac{1}{p}}<\infty\bigg\}.$$ Then Theorem \ref{th2} still holds.

\vspace{3mm}An {\it affine isometry} is a function $A:B\to B$ sending an element $\xi\in B$ to $I(\xi)+b$ where $I$ is a bijective linear isometry and $b\in B$ is a fixed element. The set of affine isometries of $B$ is a group under composition. We denote this group by $\rm{AffIsom}(B)$.
\begin{remark}
By the Mazur-Ulam theorem \cite{Mazur}, any bijective isometry between normed spaces over $\mathbb{R}$ is in fact an affine isometry.~\label{re1}
\end{remark}
An affine isometric action of a group $G$ on a Banach space $B$ is a homomorphism $\alpha$ of $G$ into the group $\rm{AffIsom}(B)$, such an action can be characterized by the following decomposition:
$$\alpha(g)(\xi)=L(g)(\xi)+b(g)$$ for all $g\in G$, $\xi\in B$, where $L$ is an isometric representation of $G$ on $B$ and $b:G\to B$ is a vector-valued map. Since $\alpha$ is a homomorphism, we have $\alpha(gh)=\alpha(g)\alpha(h)$ for every $g,h\in G$. Hence, for any $\xi\in B$,$$L(gh)(\xi)+b(gh)=\alpha(gh)(\xi)=\alpha(g)\alpha(h)(\xi)=L(g)L(h)(\xi)+L(g)b(h)+b(g).$$
Thus,
\begin{equation}\label{equ3}
b(gh)=L(g)b(h)+b(g).
\end{equation}
The map $b$ satisfying the condition \eqref{equ3} is called a {\it cocycle} for $L$. The action $\alpha$ is said to be proper if it satisfies:
\begin{equation}\label{equ2}
\forall\xi\in B,\,\,\forall A\subset B \,\,\rm{bounded}, \,\#\{g\in G:\alpha(g)(\xi)\in A\}<\infty.
\end{equation}
If we equip $G$ with a length function $l$, then the condition \eqref{equ2} is equivalent to the following:
\begin{equation}\label{equ21}
\lim\limits_{l(g)\to\infty}\|b(g)\|_B =\infty.
\end{equation}

The notion of fibred coarse embedding into Hilbert space for metric spaces has been introduced in \cite{Chen}. The Hilbert space in the notion can be replaced by any other metric space $Y$ as a model space.
\begin{definition}[see \cite{Wang}]
A metric space $(X,d_X)$ is said to admit a {\it fibred coarse embedding into a metric space} $(Y,d_Y)$ if there exist:

\vspace{3mm}$\bullet$ a field of metric spaces $(Y_x)_{x\in X}$ such that each $Y_x$ is isometric to $Y$;

\vspace{3mm}$\bullet$ a section $s:X\to\bigsqcup_{x\in X}{Y_x}$ (that is, $s(x)\in Y_x$);

\vspace{3mm}$\bullet$ two non-decreasing functions $\rho_1$ and $\rho_2$ from $[0,\infty)$ to $(-\infty,\infty)$ with
$\lim\limits_{r\to\infty}\rho_{i}(r)=\infty \,(i=1,2)$
\begin{flushleft}
such that, for any $r>0$, there exists a bounded subset $K_{r}\subset
X$ for which there exists a ¡®trivialization¡¯
\end{flushleft}
$$t_{C}:(Y_x)_{x\in C}\to C\times Y$$ for each subset $C\subset
X\backslash K_r$ of diameter less than $r$, that is, a map from $(Y_x)_{x\in C}$ to the constant field $C\times Y$
over $C$ such that the restriction of $t_C$ to the fibre $Y_{x}\,(x\in C)$ is an isometry
$t_C(x):Y_{x}\to Y$, satisfying the following conditions:

\vspace{3mm}(i) for any $x,x'\in C,\,\rho_{1}(d_X (x,x'))\leq d_Y(t_{C}(x)(s(x)),t_{C}(x')(s(x')))\leq\rho_{2}(d_X(x,x'))$;

\vspace{3mm}(ii) for any two subsets $C_1,C_2\subset X\backslash K_r$ of diameter less than r with $C_{1}\cap C_{2}\neq \varnothing$, there exists an isometry $t_{C_{1}C_{2}}:Y\to Y$ such that $t_{C_1}(x)\circ t_{C_2}^{-1}(x)=t_{C_{1}C_{2}}$ for all $x\in C_{1}\cap C_{2}$.~\label{def3}
\end{definition}
An important application of this property is the following result in \cite{Chen}:

\begin{theorem}[see \cite{Chen}]
Let $X$ be a bounded geometry metric space admitting a fibred coarse embedding into Hilbert space. Then the maximal coarse Baum-Connes conjecture holds for $X$.~\label{th3}
\end{theorem}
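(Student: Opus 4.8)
The plan is to recast the maximal coarse Baum--Connes conjecture for $X$ in the form most amenable to a Dirac--dual-Dirac argument, and then to use the fibred coarse embedding into Hilbert space as the geometric input that lets one twist the maximal Roe algebra by an infinite-dimensional Bott--Dirac operator living on the Hilbert-space fibres. Concretely, following Yu's localization reformulation, I would first identify the assembly map $\mu:\varinjlim_{d}K_*(P_d(X))\to K_*(C^*_{\max}(X))$, where $P_d(X)$ is the Rips complex and $C^*_{\max}(X)$ the maximal Roe algebra, with the evaluation map
$$\mathrm{ev}_*:K_*(C^*_{L,\max}(X))\to K_*(C^*_{\max}(X))$$
from the $K$-theory of the maximal localization algebra; the bounded geometry hypothesis guarantees that these algebras and the identification are well behaved. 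The theorem then becomes the statement that $\mathrm{ev}_*$ is an isomorphism.

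Next I would build the coefficient algebra. From the field of Hilbert spaces $(Y_x)_{x\in X}$, the section $s$ (which furnishes a basepoint in each fibre), and the local trivializations $t_C$ supplied by Definition \ref{def3}, I would attach to each point the Higson--Kasparov--Trout algebra of the fibre $Y_x$, namely the graded $C^*$-algebra generated by the Clifford algebra together with functions vanishing at infinity along finite-dimensional affine subspaces. Assembling these with the propagation and local-finiteness structure of the Roe algebra yields a twisted maximal Roe algebra $C^*_{\max}(X,\cA)$ and its localized version $C^*_{L,\max}(X,\cA)$. Here conditions (i) and (ii) of Definition \ref{def3} are exactly what is needed: the trivializations identify the fibres over each subset of bounded diameter away from $K_r$, and the cocycle relation $t_{C_1}(x)\circ t_{C_2}^{-1}(x)=t_{C_1C_2}$ ensures that these identifications glue consistently, so the twist is locally trivial even though the embedding is only fibred.

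The technical core is the \emph{twisted} maximal coarse Baum--Connes conjecture, namely that
$$\mathrm{ev}_*:K_*(C^*_{L,\max}(X,\cA))\to K_*(C^*_{\max}(X,\cA))$$
is an isomorphism. I would prove this by a controlled Mayer--Vietoris (cutting-and-pasting) induction over the coarse structure of $X$: decompose $X$ into pieces of bounded geometry, use the local trivializations to identify the twisted algebra over each piece with the tensor product of an untwisted maximal Roe-type algebra and the Higson--Kasparov--Trout algebra of a single model Hilbert space, and invoke infinite-dimensional Bott periodicity to see that evaluation is an isomorphism on each local contribution; a five-lemma argument applied to the Mayer--Vietoris sequences then propagates the isomorphism to all of $X$. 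Finally I would construct a Bott map $\beta$ and a Dirac map $\alpha$ between the untwisted and twisted algebras, together with localized analogues $\beta_L,\alpha_L$, arranged into a square commuting with the evaluation maps; the Bott--Dirac index computation gives $\alpha\circ\beta=\mathrm{id}$, and the localized Bott map $\beta_L$ is an isomorphism by a homotopy at the level of the localization algebra. A diagram chase through the commuting square, using that the twisted $\mathrm{ev}_*$ and $\beta_L$ are isomorphisms and that $\alpha\circ\beta=\mathrm{id}$, then forces the untwisted $\mathrm{ev}_*$ to be an isomorphism, which is the conclusion.

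The step I expect to be the main obstacle is controlling the \emph{maximal} completions throughout. In contrast with the reduced Roe algebra, $C^*_{\max}$ is not automatically functorial under the cutting operations, so the Mayer--Vietoris exact sequences, the ideal and quotient decompositions, and the Bott and Dirac maps must each be shown to descend to the maximal norm, and one must verify that twisting by the Higson--Kasparov--Trout algebra is compatible with the maximal completion. Establishing these compatibilities --- and checking that trivializations coming from a merely \emph{fibred} embedding still suffice to trivialize the twist on bounded pieces --- is where the genuine difficulty lies; by comparison the infinite-dimensional Bott periodicity input and the final diagram chase, while delicate, are essentially standard.
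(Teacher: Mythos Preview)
The paper does not prove this theorem at all: it is quoted verbatim from \cite{Chen} as background motivation for the notion of fibred coarse embedding, and no argument is supplied. There is therefore nothing in the paper to compare your proposal against.

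That said, your outline is a faithful sketch of the Chen--Wang--Yu strategy. The identification of the assembly map with the evaluation map out of the localization algebra, the construction of the twisted maximal Roe algebra using the Higson--Kasparov--Trout coefficient algebra over the Hilbert-space fibres, the proof of the twisted conjecture via controlled Mayer--Vietoris and infinite-dimensional Bott periodicity, and the final Dirac--dual-Dirac diagram chase are exactly the steps in \cite{Chen}. Your identification of the main difficulty --- that all constructions must be shown to descend to the \emph{maximal} completion, and that the merely fibred (rather than global) trivializations suffice to locally trivialize the twist --- is also accurate; these are precisely the points where \cite{Chen} goes beyond Yu's original reduced-norm argument for genuine coarse embeddings.
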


In \cite{Orzechowski}, K. Orzechowski modified the definition above slightly so that it would be useful for the box family.
\begin{definition}[see \cite{Orzechowski}]
A family $\mathscr{X}$ of disjoint metric spaces is said to admit a {\it fibred cofinitely-coarse embedding into a metric space} $(Y,d_Y)$ if there exist:

\vspace{3mm}$\bullet$ a field of metric spaces $(Y_x)_{x\in \bigcup\mathscr{X}}$ such that each $Y_x$ is isometric to $Y$;

\vspace{3mm}$\bullet$ a section $s:\bigcup\mathscr{X}\to\bigsqcup_{x\in \bigcup\mathscr{X}}{Y_x}$ (that is, $s(x)\in Y_x$);

\vspace{3mm}$\bullet$ two non-decreasing functions $\rho_1$ and $\rho_2$ from $[0,\infty)$ to $(-\infty,\infty)$ with
$\lim\limits_{r\to\infty}\rho_{i}(r)=\infty \,(i=1,2)$
\begin{flushleft}
such that, for any $r>0$, there exists a finite subfamily $K_{r}\subset\mathscr{X}$ such that, for any metric space $(X,d_X)\in \mathscr{X}\backslash K_r$ and for any subset $C\subset X$ of diameter less than $r$, there exists a ¡®trivialization¡¯
\end{flushleft}
 $$t_{C}:(Y_x)_{x\in C}\to C\times Y$$ such that the restriction of $t_C$ to the fibre $Y_{x}\,(x\in C)$ is an isometry
$t_C(x):Y_{x}\to Y$, satisfying the following conditions:

\vspace{3mm}(i) for any $x,x'\in C,\,\rho_{1}(d_X(x,x'))\leq d_Y(t_{C}(x)(s(x)),t_{C}(x')(s(x')))\leq\rho_{2}(d_X(x,x'))$;

\vspace{3mm}(ii) for any two subsets $C_1,C_2\subset X\backslash K_r$ of diameter less than r with $C_{1}\cap C_{2}\neq \varnothing$, there exists an isometry $t_{C_{1}C_{2}}:Y\to Y$ such that $t_{C_1}(x)\circ t_{C_2}^{-1}(x)=t_{C_{1}C_{2}}$ for all $x\in C_{1}\cap C_{2}$.~\label{def4}
\end{definition}

For more details on fibred cofinitely-coarse embeddability, the readers can consult \cite{Orzechowski}.
%%%%%%%%%%%%%%%%%%%%%%%%%%%%%%%%%%%%%%%%%%%%%%%%%%%%%%%%%%%%%%%%%%%%%%%%%%%%%%%%%%%%%%%%%
\section{Proof of the main results}
In this section, we will prove Theorem \ref{th1}. Firstly, by using the notion of {\it ultrafilters} and {\it ultraproducts}, we are going to build a global isometric affine action from a family of {\it $r$-locally isometric affine actions} with $r\to+\infty$.
\begin{definition}[see \cite{Aksoy}]
Let $\mathcal{U}$ be a subset of $\mathcal{P}(\mathbb{N})$. $\mathcal{U}$ is said to be a {\it non-principal ultrafilter} on $\mathbb{N}$ if it satisfies:

(1) the empty set $\varnothing$ does not belong to $\mathcal{U}$;

(2) if $A,B\in\mathcal{P}(\mathbb{N})$ and $A\subset B$, $A\in\mathcal{U}$, then $B\in\mathcal{U}$;

(3) if $A\in\mathcal{P}(\mathbb{N})$, then $A\in\mathcal{U}$ or $\mathbb{N}\setminus A\in\mathcal{U}$;

(4) all the finite subsets of $\mathbb{N}$ do not belong to $\mathcal{U}$.~\label{def5}
\end{definition}

Let $(x_r)_{r\in\mathbb{N}}$ be a bounded real-valued  sequence. The {\it $\mathcal{U}$-limit} of $(x_r)_{r\in\mathbb{N}}$ is the unique $x\in\mathbb{R}$  denoted by $\lim_{\mathcal{U}}x_r$; it is characterized by the fact that the set $\{r\in\mathbb{N}:|x_{r}-x|\leq\varepsilon\}\in\mathcal{U}$ for any $\varepsilon >0$.

Next we shall introduce the {\it ultraproduct} of a family of Banach spaces (see \cite{Jinxiu}).

Let $(B_r)_{r\in\mathbb{N}}$ be a family of Banach spaces and $\mathcal{U}$ be a non-principal ultrafilter on $\mathbb{N}$. Consider the space $$\ell^{\infty}(\mathbb{N},(B_r)_{r\in\mathbb{N}}):=\bigg\{(a_r)\in\prod_{r\in\mathbb{N}}B_r :\sup_{r\in\mathbb{N}}\|a_r\|_{B_r}\leq\infty\bigg\}$$
endowed with the {\it seminorm} $\|(a_r)\|_{\mathcal{U}}:=\lim_{\mathcal{U}}\|a_r\|_{B_r}$ for $(a_r)\in\ell^{\infty}(\mathbb{N},(B_r)_{r\in\mathbb{N}})$,
we set
\begin{equation}\label{equ31}
(a_r)\sim_{\mathcal{U}}(b_r)\,\,\rm{if\,and \,only \,if}\,\|(a_r) -(b_r)\|_{\mathcal{U}}=0.
\end{equation}
It's easy to verify that (\ref{equ31}) defines an equivalence relation $\sim_{\mathcal{U}}$ on $\ell^{\infty}(\mathbb{N},(B_r)_{r\in\mathbb{N}})$. Then the ultraproduct $B_{\mathcal{U}}$ of the family $(B_r)_{r\in\mathbb{N}}$ with respect to a non-principal ultrafilter $\mathcal{U}$ is the closure of the space $\ell^{\infty}(\mathbb{N},(B_r)_{r\in\mathbb{N}})/\sim_{\mathcal{U}}$ with respect to the norm $\|\cdot\|_{B_{\mathcal{U}}}:=\|\cdot\|_{\mathcal{U}}$.

\begin{definition}[see \cite{Arnt}]
Let $G$ be a finitely generated group and $r$ be a non-negative real number.

\vspace{3mm}(1) Let $X$ be a set. A map $\alpha:G\times X\to X$ is said to be {\it an $r$-local action} of $G$ on $X$ if:

\vspace{3mm}i) for all $g\in G$ satisfying $l(g)<r$, $\alpha(g):X\to X$ is a bijection;

\vspace{3mm}ii) for all $g,h\in G$ such that $l(g)$, $l(h)$, $l(gh)$ are less than $r$, $$\alpha(gh)=\alpha(g)\alpha(h).$$

\vspace{3mm}(2) Let $B$ be a Banach space. A map $L:G\times B\to B$ is said to be {\it an $r$-local isometric representation} of $G$ on $B$ if $L$ is an $r$-local action of $G$ on $B$ and for all $g\in G$ such that $l(g)<r$, $L(g):B\to B$ is a linear isometry. By remark \ref{re1}, there is a map $b:G\to B$ such that, for all $g,h\in G$ satisfying $$\max\{l(g),\,l(h),\,l(gh)\}\leq r,\,\,L(g)b(h)+b(g)=b(gh),$$ and $b$ is said to be {\it an $r$-local cocycle} with respect to $L$.

\vspace{3mm}(3) Let $B$ be a Banach space. A map $\alpha:G\times B\to B$ is called {\it an $r$-local isometric affine action} of $G$ on $B$ if it satisfies $$\alpha(g)(\xi)=L(g)(\xi)+b(g)$$ for any $\xi\in B$, where $L$ is an $r$-local isometric representation and $b$ is an $r$-local cocycle for $L$.~\label{def6}

\end{definition}

The following lemma was proved by A. G. Aksoy and M. A. Khamsi in \cite{Aksoy}.
\begin{lemma}
Let $(B_r)_{r\in\mathbb{N}}$ be a family of uniformly convex Banach spaces with a common modulus of convexity. Then for any ultrafilter $\mathcal{U}$ on $\mathbb{N}$ the ultraproduct $B_{\mathcal{U}}$ is uniformly convex.~\label{lem1}
\end{lemma}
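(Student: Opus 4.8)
The goal is to show that if $(B_r)_{r\in\mathbb{N}}$ is a family of uniformly convex Banach spaces sharing a common modulus of convexity, i.e.\ $\delta(t):=\inf_r\delta_{B_r}(t)>0$ for every $t>0$, then for any non-principal ultrafilter $\mathcal{U}$ on $\mathbb{N}$ the ultraproduct $B_{\mathcal{U}}$ is uniformly convex. The plan is to prove the quantitative statement that $\delta_{B_{\mathcal{U}}}(t)\geq\delta(t)$ for every $t\in[0,2]$; uniform convexity of $B_{\mathcal{U}}$ then follows immediately from $\delta(t)>0$. So the real content is the inequality $\delta_{B_{\mathcal{U}}}\geq\delta$, which is what I would establish.

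First I would fix $t>0$ and pick two elements $\bar\xi=[(\xi_r)]$ and $\bar\xi'=[(\xi'_r)]$ of $B_{\mathcal{U}}$ with $\|\bar\xi\|_{B_{\mathcal{U}}}=\|\bar\xi'\|_{B_{\mathcal{U}}}=1$ and $\|\bar\xi-\bar\xi'\|_{B_{\mathcal{U}}}\geq t$. Since the ultraproduct norm is defined as a $\mathcal{U}$-limit of the fibrewise norms, these three conditions read $\lim_{\mathcal{U}}\|\xi_r\|_{B_r}=1$, $\lim_{\mathcal{U}}\|\xi'_r\|_{B_r}=1$, and $\lim_{\mathcal{U}}\|\xi_r-\xi'_r\|_{B_r}\geq t$. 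Fix a small $\varepsilon>0$. By the characterization of the $\mathcal{U}$-limit, each of the sets $\{r:\,|\,\|\xi_r\|-1|\leq\varepsilon\}$, $\{r:\,|\,\|\xi'_r\|-1|\leq\varepsilon\}$, and $\{r:\,\|\xi_r-\xi'_r\|\geq t-\varepsilon\}$ lies in $\mathcal{U}$; since $\mathcal{U}$ is a filter, their intersection $S_\varepsilon$ lies in $\mathcal{U}$, hence is nonempty (indeed infinite). For $r\in S_\varepsilon$ the vectors $\xi_r,\xi'_r$ are nearly unit vectors that are $(t-\varepsilon)$-separated, and a standard normalization argument (replace $\xi_r$ by $\xi_r/\|\xi_r\|$, controlling the error by $\varepsilon$) lets me invoke the definition of $\delta_{B_r}$ to conclude $1-\bigl\|\tfrac{\xi_r+\xi'_r}{2}\bigr\|_{B_r}\geq\delta_{B_r}(t')-c\varepsilon\geq\delta(t')-c\varepsilon$ for a suitable $t'$ close to $t$ (say $t'=t-2\varepsilon$) and an absolute constant $c$; the monotonicity of $\delta$ in $t$ lets me eventually compare with $\delta(t)$ after letting $\varepsilon\to0$.

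Next I would push this fibrewise estimate through the $\mathcal{U}$-limit. Since $S_\varepsilon\in\mathcal{U}$ and for all $r\in S_\varepsilon$ we have $\bigl\|\tfrac{\xi_r+\xi'_r}{2}\bigr\|_{B_r}\leq 1-\delta(t-2\varepsilon)+c\varepsilon$, the $\mathcal{U}$-limit of the left side is $\leq 1-\delta(t-2\varepsilon)+c\varepsilon$; but that $\mathcal{U}$-limit is exactly $\bigl\|\tfrac{\bar\xi+\bar\xi'}{2}\bigr\|_{B_{\mathcal{U}}}$ by definition of the ultraproduct norm and linearity of the quotient map. Hence $1-\bigl\|\tfrac{\bar\xi+\bar\xi'}{2}\bigr\|_{B_{\mathcal{U}}}\geq\delta(t-2\varepsilon)-c\varepsilon$. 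Letting $\varepsilon\to0$ and using that $\delta$ is non-decreasing and (for the limiting comparison) left-continuous up to the harmless loss absorbed in taking the infimum, this gives $1-\bigl\|\tfrac{\bar\xi+\bar\xi'}{2}\bigr\|_{B_{\mathcal{U}}}\geq\delta(t)$ — or at worst $\geq\delta(t')$ for every $t'<t$, which already forces the quantity to be strictly positive whenever $t>0$. Taking the infimum over all such pairs $\bar\xi,\bar\xi'$ yields $\delta_{B_{\mathcal{U}}}(t)\geq\delta(t)>0$, so $B_{\mathcal{U}}$ is uniformly convex.

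I expect the only genuinely delicate point to be the \emph{normalization step}: the definition of $\delta_{B_r}$ requires honest unit vectors, whereas the $\xi_r$ for $r\in S_\varepsilon$ only have norm within $\varepsilon$ of $1$, so I must carefully track how replacing $\xi_r,\xi'_r$ by their normalizations perturbs both the separation $\|\xi_r-\xi'_r\|$ and the midpoint norm $\bigl\|\tfrac{\xi_r+\xi'_r}{2}\bigr\|$, obtaining explicit linear-in-$\varepsilon$ bounds so that everything survives the passage $\varepsilon\to0$. Everything else — the filter arithmetic giving $S_\varepsilon\in\mathcal{U}$, the compatibility of $\mathcal{U}$-limits with the linear operations and the norm on the ultraproduct, and the monotonicity of $\delta$ — is routine. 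Since this lemma is quoted from Aksoy–Khamsi \cite{Aksoy}, it would also be legitimate simply to cite it, but the argument above is the self-contained one I would write out.
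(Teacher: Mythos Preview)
Your argument is correct. The paper itself does not prove Lemma~\ref{lem1}: it simply attributes the result to Aksoy--Khamsi~\cite{Aksoy} and states it without proof, so your self-contained treatment goes beyond what the paper does. The route you take --- pull back to representatives, work fibrewise on a set in $\mathcal{U}$, apply the common modulus, then pass to the $\mathcal{U}$-limit --- is the standard one and is exactly what one finds in~\cite{Aksoy}.

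One small simplification: the normalization step you single out as delicate can be eliminated entirely. Since $\lim_{\mathcal{U}}\|\xi_r\|_{B_r}=1$, the sequence $(\xi_r/\|\xi_r\|_{B_r})$ (defined on the $\mathcal{U}$-large set where $\xi_r\neq 0$, and arbitrarily elsewhere) is $\sim_{\mathcal{U}}$-equivalent to $(\xi_r)$; so you may assume from the start that every $\xi_r$ and $\xi'_r$ has norm exactly~$1$. Then only the separation requires the $\varepsilon$-argument, and for $r$ in $\{r:\|\xi_r-\xi'_r\|_{B_r}\geq t-\varepsilon\}\in\mathcal{U}$ one gets directly $\bigl\|\tfrac{\xi_r+\xi'_r}{2}\bigr\|_{B_r}\leq 1-\delta(t-\varepsilon)$, with no extraneous $c\varepsilon$ terms. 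Your hedge about left-continuity of $\delta$ is also handled correctly: obtaining $\delta_{B_{\mathcal{U}}}(t)\geq\sup_{t'<t}\delta(t')$ already suffices for uniform convexity, since $\delta$ is positive on $(0,2]$.
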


The next Lemma introduced in \cite{Arnt} is a very important tool in proving our main result. We include the proof below for the convenience of the readers.

\begin{lemma}[see \cite{Arnt}]
Let $G$ be a finitely generated group, $(B_r)_{r\in\mathbb{N}}$ be a family of Banach spaces and $B_{\mathcal{U}}$ be the ultraproduct of the family $(B_r)_{r\in\mathbb{N}}$ with respect to a non-principal ultrafilter $\mathcal{U}$ on $\mathbb{N}$. For each $r\in\mathbb{N}$, assume that $G$ admits an $r$-local isometric affine action $\alpha_{r}$ on $B_r$ with $$\alpha_{r}(g)\cdot=L_{r}(g)\cdot+b_{r}(g).$$ If, for all $g\in G,$ $(b_r (g))_{r\in\mathbb{N}}$ belongs to $B_{\mathcal{U}}$, then there exists an isometric affine action $\alpha$ of $G$ on $B_{\mathcal{U}}$ such that $$\alpha(g)\cdot=L(g)\cdot+b(g)$$ where $L$ is an isometric representation of $G$ on $B_{\mathcal{U}}$  such that $L(g)a=(L_r (g)a_r)_{r\in\mathbb{N}}$ for $g\in G$, $a=(a_r)_{r\in\mathbb{N}}\in B_{\mathcal{U}}$ and $b:G\to B_{\mathcal{U}}$ is a cocycle with respect to $L$ satisfying, for $g\in G$:$$b(g)=(b_r (g))_{r\in\mathbb{N}}.$$~\label{lem2}
\end{lemma}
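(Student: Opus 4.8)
The plan is to construct the global action $\alpha$ on $B_{\mathcal{U}}$ directly from the components and verify it has the required properties. First I would define the candidate linear representation: for $g\in G$ with $l(g)<r$, the map $L_r(g)$ is a linear isometry of $B_r$, so for any $a=(a_r)_{r\in\mathbb{N}}\in\ell^\infty(\mathbb{N},(B_r))$ I would set $L(g)a:=\big(\tilde L_r(g)a_r\big)_{r\in\mathbb{N}}$, where $\tilde L_r(g)=L_r(g)$ whenever $l(g)<r$ and $\tilde L_r(g)=\mathrm{id}$ (or anything) for the finitely many indices $r\le l(g)$. Since $\mathcal{U}$ is non-principal, those finitely many indices lie outside every set in $\mathcal{U}$, so the $\mathcal{U}$-limit ignores them; hence $\|L(g)a\|_{\mathcal U}=\lim_{\mathcal U}\|a_r\|_{B_r}=\|a\|_{\mathcal U}$, which shows $L(g)$ preserves the seminorm, descends to the quotient $\ell^\infty/\!\sim_{\mathcal U}$, and extends by continuity to an isometry of $B_{\mathcal{U}}$.

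Next I would check that $g\mapsto L(g)$ is a homomorphism into the bijective linear isometries of $B_{\mathcal{U}}$. Fix $g,h\in G$. For all but finitely many $r$ we have $l(g),l(h),l(gh)<r$, and for such $r$ the $r$-local action axiom gives $L_r(gh)=L_r(g)L_r(h)$ on $B_r$; again the exceptional indices are $\mathcal U$-null, so $L(gh)=L(g)L(h)$ on $B_{\mathcal{U}}$. In particular $L(g)L(g^{-1})=L(e)=\mathrm{id}$, so each $L(g)$ is a surjective linear isometry. Then I would define $b(g):=(b_r(g))_{r\in\mathbb{N}}$, which lies in $B_{\mathcal{U}}$ by hypothesis, and set $\alpha(g)\xi:=L(g)\xi+b(g)$. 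The cocycle identity $b(gh)=L(g)b(h)+b(g)$ follows coordinatewise: for all but finitely many $r$ we have $\max\{l(g),l(h),l(gh)\}\le r$, so the $r$-local cocycle relation $b_r(gh)=L_r(g)b_r(h)+b_r(g)$ holds, and passing to the $\mathcal U$-limit (discarding the finitely many bad indices) yields the global relation. From $b$ being a cocycle for $L$, the standard computation in the excerpt shows $\alpha(gh)=\alpha(g)\alpha(h)$, so $\alpha$ is an isometric affine action of $G$ on $B_{\mathcal{U}}$.

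The only delicate point — and I expect it to be the main thing to be careful about rather than a genuine obstacle — is the bookkeeping of ``for all but finitely many $r$'' versus membership in $\mathcal U$: one must consistently use that $\mathcal U$ is non-principal so that cofinite sets belong to $\mathcal U$ and the $\mathcal U$-limit is insensitive to finite modifications of a sequence, and one must make sure the ad hoc choice of $\tilde L_r(g)$ on the small indices neither affects well-definedness on the quotient nor the seminorm computation. Everything else is a routine transfer of the $r$-local algebraic identities through the ultraproduct, together with the observation that affine isometries are determined by a linear part plus a translation (Remark \ref{re1}). I would close by noting that no uniform convexity hypothesis is needed here; that ingredient, via Lemma \ref{lem1}, is used elsewhere in the proof of Theorem \ref{th1} to ensure $B_{\mathcal{U}}$ is uniformly convex.
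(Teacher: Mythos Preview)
Your proposal is correct and follows essentially the same route as the paper: both arguments use that for fixed $g,h$ the $r$-local identities $L_r(gh)=L_r(g)L_r(h)$ and $b_r(gh)=L_r(g)b_r(h)+b_r(g)$ hold for all but finitely many $r$, and then invoke non-principality of $\mathcal{U}$ to pass these identities to the ultraproduct. If anything, your write-up is slightly more explicit than the paper's about the ad~hoc definition of $L_r(g)$ on the finitely many indices $r\le l(g)$ and about well-definedness on the quotient; the paper leaves these points implicit.
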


\begin{proof}
First, we show that $L$ is an isometric representation of $G$ on $B_{\mathcal{U}}$.

Let $g,h\in G$, $a=(a_r)_{r\in\mathbb{N}}\in B_{\mathcal{U}}$. We have, for any $\varepsilon>0$, the set $$\{r\in\mathbb{N}:\|L_r (g)L_r (h)a_r-L_r (gh)a_r\|_{B_r} >\varepsilon\}$$ is finite and then its complement $$\{r\in\mathbb{N}:\|L_r (g)L_r (h)a_r-L_r (gh)a_r\|_{B_r} \leq\varepsilon\}$$ belongs to $\mathcal{U}$.
By the notion of $B_{\mathcal{U}}$, we obtain that $L(g)L(h)=L(gh)$ for all $g,\,h\in G$. This shows that $L$ is a representation of $G$ on $B_{\mathcal{U}}$.
Now, for $g\in G$, since for all $r$ large enough, $L_r (g)$ is an isometric isomorphism of $B_r$, it follows, by a similar argument, that $L(g)$ is an isometric isomorphism of $B_{\mathcal{U}}$. Thus, $L$ is an isometric representation of $G$ on $B_{\mathcal{U}}$.

Second, we show that $b$ is a cocycle with respect to $L$.
Let $g,h\in G$. For all $r\in\mathbb{N}$ such that $$r>\max(l(g),l(h),l(gh)),$$ we have $$b_r (gh)=L_r (g)b_r (h)+b_r (g).$$ Hence, $$b(gh)=L(g)b(h)+b(g)$$ for all $g,h\in G$, and then $b$ is a cocycle with respect to $L$.

{\noindent}It follows that the map $\alpha$ such that $$\alpha(g)\cdot=L(g)\cdot+b(g)$$ is an isometric affine action of $G$ on $B_{\mathcal{U}}$.
\end{proof}

Now we are ready to complete the proof of our main result.
\begin{proposition}
Let $G$ be a countable, residually amenable group. If a box family of $G$ admits a fibred cofinitely-coarse embedding into a
uniformly convex Banach space, then there exists a uniformly convex Banach space $B'$ and affine isometric action of $G$ on $B'$ which is proper.~\label{pro1}
\end{proposition}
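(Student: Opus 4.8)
The plan is to realise the desired proper action on an \emph{ultraproduct}, by producing for every $r\in\mathbb N$ an $r$-local isometric affine action of $G$ on a uniformly convex Banach space $B_r$ — all the $B_r$ sharing a common modulus of convexity — and then passing to $B_{\mathcal U}$ via Lemmas \ref{lem1} and \ref{lem2}. Throughout, fix $p\in(1,\infty)$, a non-principal ultrafilter $\mathcal U$ on $\mathbb N$, and the data of the given fibred cofinitely-coarse embedding of the box family $\{(G/G_n,d_n)\}_n$ into the uniformly convex Banach space $Y$: the section $s$, the non-decreasing controls $\rho_1,\rho_2$ with $\lim_{t\to\infty}\rho_i(t)=\infty$, and for each parameter $t>0$ the finite exceptional subfamily $K_t$ together with the trivializations over subsets of $d_n$-diameter $<t$ of the spaces outside $K_t$; we may assume $K_t$ increases with $t$.

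First I would normalise the data and choose indices. Post-composing a trivialization $t_C$ with the translation of $Y$ by $-t_C(x_C)(s(x_C))$, for a designated basepoint $x_C\in C$, leaves conditions (i), (ii) of Definition \ref{def4} intact, so I may assume $t_C(x_C)(s(x_C))=0$; for the balls $C_z$ introduced below I take $x_{C_z}=z$. Next, since $l$ is proper (so balls in $G$ are finite), $\bigcap_nG_n=\{e_G\}$, and $K_{10r}$ is finite, choose for each $r$ an index $n(r)$, increasing in $r$, with $G/G_{n(r)}\notin K_{10r}$ and $G_{n(r)}\cap\{g:l(g)\le 2r\}=\{e_G\}$; the latter forces $l_{n(r)}([g])=l(g)$ for $l(g)\le r$, so $\Gamma:=G/G_{n(r)}$ looks like $G$ up to scale $r$. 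Since $G$ is countable and $\Gamma$ is amenable (Definition \ref{def1}), fix a right-invariant finitely additive probability measure $\mu$ on $\Gamma$ and set $B_r:=\prod_{z\in\Gamma}(Y,\mu)$ with $\|b\|_p=(\int_\Gamma\|b_z\|_Y^p\,d\mu(z))^{1/p}$, as in the construction recalled after Theorem \ref{th2}. Because every fibre is the fixed uniformly convex space $Y$ and the modulus of convexity of an $\ell^p$-sum of copies of $Y$ depends only on $p$ and $\delta_Y$ (Day's estimate underlying Theorem \ref{th2}), all the $B_r$ share this one modulus of convexity.

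The core is the $r$-local action on $B_r$. Here I would use that, $\Gamma$ being a group, right translation $\tau_g\colon z\mapsto z[g]$ is well defined on $\Gamma$ and moves \emph{every} point exactly the distance $l_{n(r)}([g])=l(g)$ when $l(g)\le r$ — this uniformity, which the left action lacks, is what makes the construction fit. For $z\in\Gamma$ let $C_z:=\{w\in\Gamma:d_{n(r)}(z,w)\le r\}$, of diameter $\le 2r<10r$, with its chosen trivialization $t_{C_z}$ normalised so $t_{C_z}(z)(s(z))=0$. Whenever $z,w$ lie in a common such ball, Definition \ref{def4}(ii) provides a surjective affine isometry of $Y$,
$$u_{zw}:=t_{C_z}(x)\circ t_{C_w}(x)^{-1}\qquad(\text{independent of }x\in C_z\cap C_w),$$
and for $l(g)<r$ I would define $\hat\alpha_r(g)\colon B_r\to B_r$ by
$$\big(\hat\alpha_r(g)\xi\big)(z):=u_{z,\,z[g]}\big(\xi(z[g])\big).$$
Right-invariance of $\mu$ makes each $\hat\alpha_r(g)$ a surjective affine isometry of $B_r$ (affine isometries preserve differences, $\tau_g$ is $\mu$-preserving), and $\hat\alpha_r(g)\hat\alpha_r(h)=\hat\alpha_r(gh)$ whenever $l(g),l(h),l(gh)<r$, since the required identity $u_{z,\,z[g]}\circ u_{z[g],\,z[gh]}=u_{z,\,z[gh]}$ reduces, by Definition \ref{def4}(ii), to evaluating all three isometries at the point $z[gh]$, which lies in all of $C_z$, $C_{z[g]}$, $C_{z[gh]}$ precisely because $l(gh),l(h)<r$. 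Writing $\hat\alpha_r(g)=L_r(g)\cdot+b_r(g)$ for the Mazur–Ulam decomposition ($b_r(g):=\hat\alpha_r(g)(0)$, $L_r(g)$ its linear part), $L_r$ is an $r$-local isometric representation and $b_r$ an $r$-local cocycle, so $\alpha_r:=\hat\alpha_r$ is an $r$-local isometric affine action in the sense of Definition \ref{def6}. Finally, evaluating at the basepoints: for $l(g)<r$,
$$b_r(g)(z)=u_{z,\,z[g]}(0)=t_{C_z}(z[g])\big(t_{C_{z[g]}}(z[g])^{-1}(0)\big)=t_{C_z}(z[g])\big(s(z[g])\big),$$
so $\|b_r(g)(z)\|_Y=\|t_{C_z}(z[g])(s(z[g]))-t_{C_z}(z)(s(z))\|_Y$, which by Definition \ref{def4}(i) applied to $z,z[g]\in C_z$ lies in $[\rho_1(l(g)),\rho_2(l(g))]$ for every $z$; integrating against $\mu$, $\rho_1(l(g))\le\|b_r(g)\|_{B_r}\le\rho_2(l(g))$.

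To conclude, for each fixed $g$ the sequence $(b_r(g))_r$ is bounded in norm (by $\rho_2(l(g))$ once $r>l(g)$; set $b_r(g)=0$ otherwise), so it defines an element of $B_{\mathcal U}$, and Lemma \ref{lem2} yields a genuine isometric affine action $\alpha(g)\cdot=L(g)\cdot+b(g)$ of $G$ on $B_{\mathcal U}$ with $b(g)=(b_r(g))_r$; Lemma \ref{lem1} makes $B_{\mathcal U}$ uniformly convex. For properness, the lower bound $\|b_r(g)\|_{B_r}\ge\rho_1(l(g))$ holds on the cofinite — hence $\mathcal U$-large — set $\{r:r>l(g)\}$, so $\|b(g)\|_{B_{\mathcal U}}=\lim_{\mathcal U}\|b_r(g)\|_{B_r}\ge\rho_1(l(g))$, which tends to $\infty$ as $l(g)\to\infty$; by the equivalence of \eqref{equ2} and \eqref{equ21}, $\alpha$ is proper, so $B':=B_{\mathcal U}$ works. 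I expect the delicate point to be the third paragraph: checking that the trivialization-patching isometries $u_{zw}$ genuinely cohere into an $r$-local action and that $b_r$ emerges with the two-sided $\rho$-bound. That is exactly where the normalisation $t_{C_z}(z)(s(z))=0$, the use of the \emph{right} translation (uniform displacement) against a \emph{right}-invariant mean (hence the amenability of the quotients, i.e.\ residual amenability of $G$), and the ``cofinitely''-clause (only finitely many exceptional $G/G_n$, so every small ball of $G/G_{n(r)}$ is trivialized) are all used; the ultraproduct and uniform-convexity parts are then routine given the cited results.
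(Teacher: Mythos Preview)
Your proof is correct and follows essentially the same route as the paper's: for each $r$ build an $r$-local affine isometric action of $G$ on the $\ell^p$-product $\prod_{z\in G/G_{n(r)}}Y$ by combining right translation on the amenable quotient (against a right-invariant mean) with the patching isometries $t_{C_1C_2}$, obtain the two-sided $\rho$-bound on the cocycle, and then pass to the ultraproduct via Lemmas \ref{lem1} and \ref{lem2}. Your normalisation $t_{C_z}(z)(s(z))=0$ together with the explicit Mazur--Ulam decomposition of the affine maps $u_{zw}$ is in fact slightly cleaner than the paper's version, which defines the linear part $\widetilde\sigma_r$ and the cocycle $\widetilde b_r$ separately and tacitly treats the transition isometries $t_{C_zC_{zx}}$ as linear when distributing them over differences.
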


\begin{proof}
Let $1\leq p<\infty$. Let $(G_n)_{n\in\mathbb{N}}$ be a sequence of normal subgroups with amenable quotients, satisfying the condition
of Definition \ref{def1}, such that the associated box family $\mathscr{X}:=\{(G/G_n, d_n):n\in\mathbb{N}\}$ admits a fibred cofinitely-coarse embedding into
a uniformly convex Banach space $B$.

We set $X_n=G/G_n$. Fix an integer $r>0$. Let $K_r$ be such as in Definition \ref{def4}. Since $K_r$ is a finite family, there exists $n_r$ such that $(X_{n_r},d_{n_r})\in\mathscr{X}\setminus K_r$. For each $C\subset X_{n_r}$ of diameter less than $r$, there exists a trivialization $t_C$ satisfying
conditions (i) and (ii) in Definition \ref{def4}.

Now, choose $n_r$ large enough such that the quotient map $\pi_{n_r}:G\to X_{n_r}$ is {\it $r$-isometric}, i.e., for each subset $Y\subset G$
of diameter less than $r$, $(\pi_{n_r})|_Y$ is an isometry onto its image.

For $z\in X_{n_r}$, we denote by $C_{z}:=\{x\in X_{n_r}:d_{X_{n_r}}(z,x)<r\}$ the $r$-ball centered in $z$ of $X_{n_r}$ and we define, for $x\in X_{n_r}$, the
following vector $c_{r}^{z}(x)$ in $B$:
\begin{align*}
c_{r}^{z}(x)=\left\{\begin{aligned}
   &t_{C_z}(z)(s(z))-t_{C_z}(zx)(s(zx)),&&{\rm{if}}\,d_{X_{n_r}}(e,x)<r \,\,({\rm{i.e.}} \,\,x\in C_e);\\
   &0,&&\rm{otherwise},\\
   \end{aligned}
   \right.
  \end{align*}
where $e$ is the identity element of $X_{n_r}$. Suppose $\rho_1,\rho_2$ are the controlling functions from Definition \ref{def4}, we obtain $$\rho_{1}(d_{X_{n_r}}(e,x))\leq \|c_{r}^{z}(x)\|_{B} \leq \rho_{2}(d_{X_{n_r}}(e,x))$$ for any $x\in C_e$.

Let us consider the map $\widetilde{b}_r:X_{n_r}\to \prod_{z\in X_{n_r}}B$, defined by, for $x\in X_{n_r}$:$$\widetilde{b}_r (x)=(c_{r}^{z}(x))_{z\in X_{n_{r}}}.$$
Now we are going to define the norm of the vectors in $\prod_{z\in X_{n_r}}B$. We refer to the amenability of $X_{n_r}$. By one of the equivalent definitions of the amenability, there exists a finitely additive and right-invariant probability measure $\mu_{n_r}:2^{X_{n_r}}\to [0,1]$. This measure determines a linear, order-preserving functional on $\ell^{\infty}(X_{n_r})$ which is invariant under composition with right multiplication in the group. For $\xi=(\xi_z)_{z\in X_{n_{r}}} \in\prod_{z\in X_{n_r}}B,\,\xi_z \in B$ define $$\| \xi\|_p=\bigg(\int_{X_{n_r}}\|\xi_z \|_B^p d\mu_{n_r}(z)\bigg)^{\frac{1}{p}}.$$
Hence, for $x\notin C_e,\,\widetilde{b}_r$ vanishes, and for $x\in C_e$,
$$\rho_{1}(d_{X_{n_r}}(e,x))\leq \|\widetilde{b}_{r}(x)\|_{p}=\bigg(\int_{X_{n_r}}\|c_{r}^{z}(x)\|_B^p d\mu_{n_r}(z)\bigg)^{\frac{1}{p}} \leq \rho_{2}(d_{X_{n_r}}(e,x)).$$

For $x\in C_e$, we define $\widetilde{\sigma}_r (x):\prod_{z\in X_{n_r}}B \to \prod_{z\in X_{n_r}}B$ by, for all $\xi =(\xi_z)_{z\in X_{n_{r}}}$:
$$\widetilde{\sigma}_r(x)(\xi)= \begin{cases} (t_{C_{z}C_{zx}}(\xi_{zx}))_{z\in X_{n_{r}}}, & {\rm{if}} \,\,x\in C_e;\\
   \xi, &\rm{otherwise}. \end{cases}$$
We claim that $\widetilde{\sigma}_r$ is an $r$-local isometric representation and $\widetilde{b}_r$ is an $r$-local cocycle with respect to $\widetilde{\sigma}_r$. First, it is clear that $\widetilde{\sigma}_r(x)$ is an isometric isomorphism for all $x\in X_{n_r}$; for $z\in X_{n_{r}}$, it follows from Definition \ref{def4} (ii) that $t_{C_{z}C_{zy}}\circ t_{C_{zy}C_{zyx}}=t_{C_{z}C_{zyx}}$ for all $x,y\in C_e$, with $d_{X_{n_r}}(e,yx)<r$.
Suppose $x,y,yx \in C_e$, for any $\xi \in \prod_{z\in X_{n_r}}B$
$$\widetilde{\sigma}_r(yx)(\xi)=(t_{C_{z}C_{zyx}}(\xi_{zyx}))_{z\in X_{n_{r}}}=(t_{C_{z}C_{zy}} \circ t_{C_{zy}C_{zyx}}(\xi_{zyx}))_{z\in X_{n_{r}}}$$
and
\begin{align*}
\widetilde{\sigma}_r(y)\widetilde{\sigma}_r(x)(\xi)&=\widetilde{\sigma}_r(y)(t_{C_{z}C_{zx}}(\xi_{zx}))_{z\in X_{n_{r}}}\\
&=(t_{C_{z}C_{zy}} \circ t_{C_{zy}C_{zyx}}(\xi_{zyx}))_{z\in X_{n_{r}}}.
\end{align*}
Hence, $\widetilde{\sigma}_r(yx)=\widetilde{\sigma}_r(y)\widetilde{\sigma}_r(x)$. Thus, $\widetilde{\sigma}_r$ is an $r$-local isometric representation of $X_{n_r}$ on $\prod_{z\in X_{n_r}}B$.

\vspace{2mm}{\noindent}Second, for $x,y\in C_e$ with $d_{X_{n_r}}(e,yx)<r$, we have $$\widetilde{\sigma}_r(y)(\widetilde{b}_r(x))+\widetilde{b}_r(y)=\widetilde{b}_r(yx).$$
In fact,
\begin{align*}
t_{C_{z}C_{zy}}(c_{r}^{zy}(x))&=t_{C_{z}C_{zy}}(t_{C_{zy}}(zy)(s(zy))-t_{C_{zy}}(zyx)(s(zyx)))\\
&=t_{C_{z}C_{zy}}\circ t_{C_{zy}}(zy)(s(zy))-t_{C_{z}C_{zy}}\circ t_{C_{zy}}(zyx)(s(zyx))\\
&=t_{C_{z}}(zy)(s(zy))-t_{C_{z}}(zyx)(s(zyx)).
\end{align*}
The last equality is by Definition \ref{def4} (ii).
Thus,$$t_{C_{z}C_{zy}}(c_{r}^{zy}(x))+c_{r}^{z}(y)=t_{C_{z}}(z)(s(z))-t_{C_{z}}(zyx)(s(zyx))=c_{r}^{z}(yx).$$
It follows that:
\begin{align*}
&\widetilde{\sigma}_r(y)(\widetilde{b}_r(x))+\widetilde{b}_r(y)\\
=&(t_{C_{z}C_{zy}}(c_{r}^{zy}(x))+c_{r}^{z}(y))_{z\in X_{n_{r}}}\\
=&(c_{r}^{z}(yx))_{z\in X_{n_{r}}}\\
=&\,\,\widetilde{b}_r(yx),
\end{align*}
which proves that $\widetilde{b}_r$ is an $r$-local cocycle with respect to $\widetilde{\sigma}_r$.

Now, let $\sigma_{r}:=\widetilde{\sigma}_r \circ \pi_{n_r}$, $b_{r}:=\widetilde{b}_r \circ \pi_{n_r}$ be the lifts of $\widetilde{\sigma}_r$ and $\widetilde{b}_r$ to the $r$-ball $\{g\in G:d_{G}(e_{G},g)<r\}$
of $G$ and define $\sigma_{r}=\rm{Id}$, $b_{r}=0$ outside the ball. Then, obviously, $\sigma_{r}$ is an $r$-local isometric representation of $G$ on $\prod_{z\in X_{n_r}}B$, $b_r$ is an $r$-local cocycle with respect to $\sigma_{r}$. Then the map $\alpha_r$ satisfying $$\alpha_{r}(g)(\xi):=\sigma_{r}(g)(\xi)+b_{r}(g)$$ for $\xi \in\prod_{z\in X_{n_r}}B$ is an $r$-local isometric affine action of $G$ on $\prod_{z\in X_{n_r}}B$ and we have, for $g\in G$ with $d_{G}(e_G,g)<r$:
$$\rho_{1}(d_{G}(e_{G},g))\leq \|b_{r}(g)\|_{p}\leq \rho_2(d_{G}(e_{G},g)).$$
By Lemma \ref{lem2}, from these local isometric affine actions, we can build a global isometric affine action of $G$.

Let $\mathcal{U}$ be a non-principal ultrafilter on $\mathbb{N}$, and let $B_{\mathcal{U}}$ be the ultraproduct of the family
\vspace{1.5mm}$(\prod_{X_{n_r}}B)_{r\in \mathbb{N}}$
with respect to $\mathcal{U}$.
For each $r\in \mathbb{N}$, $\alpha_r$ is an $r$-local isometric affine action of
\vspace{1.5mm}$G$ on $\prod_{X_{n_r}}B$ and for $g\in G$, $\|b_{r}(g)\|_{p}\leq \rho_2(d_{G}(e_{G},g))$ for all $r\in \mathbb{N}$, so $(b_{r}(g))_{r\in \mathbb{N}}\in B_{\mathcal{U}}$. Hence, by Lemma \ref{lem2}, there exists an isometric affine action $\alpha$ of $G$ on $B_{\mathcal{U}}$ such that $b:g\mapsto (b_{r}(g))_{r\in\mathbb{N}}$ is a cocycle for $\alpha$. In addition, since for any $g\in G$, $\rho_{1}(d_{G}(e_{G},g))\leq \|b_{r}(g)\|_{p}$ for all $r$ large enough, we have $\rho_{1}(d_{G}(e_{G},g))\leq \|b(g)\|_{B_{\mathcal{U}}}$ for all $g\in G$. Then, by the condition \eqref{equ21}, $\alpha$ is proper.

Now we will show that $B_{\mathcal{U}}$ is a uniformly convex Banach space.
By Day's Theorem (i.e.$\,$Theorem \ref{th2}), we obtain that $\prod_{X_{n_r}}B$ is uniformly convex. Let $E_{i}:=\prod_{X_{n_{r_i}}}B$, $i\in \mathbb{N}$. Then $\{E_{i}\}_{i\in \mathbb{N}}$ is a family of uniformly convex Banach spaces with a common modulus of convexity. Indeed, without loss of generality we may assume that  $G/G_1$ is countably infinite. It follows that for every  $n\in \mathbb{N}$, $G/G_n$ is countably infinite. For each $i\in\mathbb{N}$, $E_i$ is isomorphic to $\bigoplus_{j\in\mathbb{N}}B$. Hence, $\{E_{i}\}_{i\in \mathbb{N}}$ have a common modulus of convexity. By Lemma \ref{lem1}, we know that $B_{\mathcal{U}}$ is a uniformly convex Banach space. It follows that there exists a proper action by affine isometries of $G$ on a uniformly convex Banach space. Let $B'=B_{\mathcal{U}}$, then the proof is complete.

\end{proof}

Now, we are moving on to the converse statement. The proof essentially applies the method used to prove Theorem 4.3 in \cite{Orzechowski} which still goes through if we replace the Hilbert space with a uniformly convex Banach space. We include the proof below.
\begin{proposition}
Let $G$ be a countable, residually amenable group. If $G$ admits a proper isometric affine action on some uniformly convex Banach space, then any box family of $G$ admits a fibred cofinitely-coarse embedding into some uniformly convex Banach space.~\label{pro2}
\end{proposition}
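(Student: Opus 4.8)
The plan is to construct the fibred cofinitely-coarse embedding by hand from the given proper action, taking $B$ itself as the target uniformly convex Banach space; since in this direction we perform no ultraproduct and no $\ell^p$-direct sum, uniform convexity is inherited for free and the whole content is geometric bookkeeping with local lifts. Fix a box family $\mathscr{X}=\{(G/G_n,d_n):n\in\mathbb{N}\}$, write $X_n=G/G_n$, and let $\pi_n\colon G\to X_n$ be the quotient homomorphism; the case of finite $G$ being trivial, assume $G$ infinite. Let $\alpha(g)\cdot=L(g)\cdot+b(g)$ be a proper affine isometric action of $G$ on a uniformly convex Banach space $B$; by \eqref{equ21} we have $\|b(g)\|_B\to\infty$ as $l(g)\to\infty$, and the cocycle relation \eqref{equ3} gives the key identity $\|b(g)-b(h)\|_B=\|L(h)b(h^{-1}g)\|_B=\|b(h^{-1}g)\|_B$. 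I would set $Y:=B$ and $Y_x:=B$ for all $x\in\bigsqcup_n X_n$, take the section to be $s(x):=0\in B$, and put $\rho_1(t):=\inf\{\|b(g)\|_B:g\in G,\ l(g)\ge t\}$ and $\rho_2(t):=\sup\{\|b(g)\|_B:g\in G,\ l(g)\le t\}$. Both are non-decreasing; $\rho_1(t)\to\infty$ is precisely \eqref{equ21}; $\rho_2$ is real-valued since a proper length function on a finitely generated group has finite balls; and $\rho_2(t)\to\infty$, for otherwise the orbit of $0$ is bounded, contradicting properness \eqref{equ2}. Note that $\rho_1,\rho_2$ depend only on $b$, as a fibred cofinitely-coarse embedding demands.

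For a fixed scale $r>0$ I would produce the excised subfamily and the trivialisations as follows. Because $\bigcap_nG_n=\{e_G\}$ and $l$ is proper, for all $n$ outside a finite subfamily $K_r\subset\mathscr{X}$ the homomorphism $\pi_n$ is injective, in fact distance-preserving, on every subset of $G$ of diameter less than $5r$ (this holds once $G_n$ meets the finite ball of radius $10r$ only in $e_G$). Fix a well-ordering of $G$. For $X_n\notin K_r$ and a subset $C\subset X_n$ of diameter less than $r$, let $z_C$ be the least element of $C$, let $\widehat{z_C}\in G$ be the $l$-shortest lift of $z_C$ (ties broken by the well-order), and for $x\in C$ let $v_{C,x}\in G$ be the $l$-shortest lift of $z_C^{-1}x\in X_n$, so that $l(v_{C,x})=d_n(z_C,x)<r$. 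Define the trivialisation $t_C(x):=\alpha(\widehat{z_C}\,v_{C,x})\colon B\to B$, which is a bijective affine isometry by Remark \ref{re1}.

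It remains to check conditions (i) and (ii) of Definition \ref{def4}. For (i), $t_C(x)(s(x))=\alpha(\widehat{z_C}v_{C,x})(0)=b(\widehat{z_C}v_{C,x})$, so by the key identity the distance $d_Y(t_C(x)(s(x)),t_C(x')(s(x')))$ equals $\|b(v_{C,x'}^{-1}v_{C,x})\|_B$; the element $w:=v_{C,x'}^{-1}v_{C,x}$ has length less than $2r$ and $\pi_n(w)=x'^{-1}x$, so comparing it, inside a subset of $G$ of diameter less than $5r$, with the $l$-shortest lift of $x'^{-1}x$ and using injectivity of $\pi_n$ forces $l(w)=l_n(x'^{-1}x)=d_n(x,x')$; hence the distance lies between $\rho_1(d_n(x,x'))$ and $\rho_2(d_n(x,x'))$. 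For (ii), if $C_1,C_2\subset X_n$ have diameter less than $r$ and $x,x'\in C_1\cap C_2$, then $v_{C_1,x}v_{C_2,x}^{-1}$ has length less than $2r$ and projects under $\pi_n$ to $z_{C_1}^{-1}z_{C_2}$, which is independent of $x$; the same comparison forces $v_{C_1,x}v_{C_2,x}^{-1}=v_{C_1,x'}v_{C_2,x'}^{-1}$, and since $\alpha$ is a genuine homomorphism of $G$, $t_{C_1}(x)\circ t_{C_2}(x)^{-1}=\alpha(\widehat{z_{C_1}}\,v_{C_1,x}v_{C_2,x}^{-1}\,\widehat{z_{C_2}}^{-1})$ is independent of $x\in C_1\cap C_2$ and serves as the required $t_{C_1C_2}$. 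This yields a fibred cofinitely-coarse embedding of $\mathscr{X}$ into the uniformly convex Banach space $B$. I expect the main obstacle to be exactly these two local comparisons: the local lifts must be chosen so that (i) holds with the fixed, scale-independent functions $\rho_1,\rho_2$ while simultaneously the ``difference'' elements governing (ii) are genuinely independent of the chosen point of $C_1\cap C_2$, and both go through only because $\pi_n$ is an honest local isometry for $n\notin K_r$, which is precisely why the excised subfamily $K_r$ must be allowed to grow with $r$.
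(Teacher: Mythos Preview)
Your proof is correct and follows essentially the same strategy as the paper: embed directly into $B$, use the local injectivity of $\pi_n$ for $n$ large to choose lifts defining the trivialisations, and reduce condition (i) via the cocycle identity to $\|b(\text{lift of }x'^{-1}x)\|_B$, with the same controlling functions $\rho_1,\rho_2$. The only difference is packaging: the paper realises the fibres as orbit spaces $B_{[a]}=(G_na\times B)/G_n$ with section $s([a])=[(a,b(a))]$, so that the field and section are canonical and lift choices enter only in the trivialisation, whereas you take constant fibres $Y_x=B$ with zero section and absorb all choices into $t_C$; after unwinding, both give $t_C(x)(s(x))=b(\text{chosen lift of }x)$ and the verifications of (i) and (ii) are identical.
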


\begin{proof}
Let $(G_{n})_{n\in \mathbb{N}}$ be a nested sequence of normal subgroups of $G$ with trivial intersection and $\mathscr{X}:=\{(G/G_n, d_n):n\in\mathbb{N}\}$ the corresponding box family. By assumption, there exists a uniformly convex Banach space $B$ and a proper action of $G$ on $B$ by affine isometries, i.e. there exists a homomorphism $\alpha:G\to \rm{AffIsom}(B)$. For $g\in G$, $\xi\in B$, $\alpha(g)$ can be written as: $$\alpha(g)(\xi)=L(g)(\xi)+b(g)$$ where $L$ is an isometric representation of $G$ and $b$ is a cocycle with respect to $L$ satisfying $$b(gh)=L(g)(b(h))+b(g)$$ for all $g,h\in G$. Note that when $g,h=e_G$, we have $b(e_G)=0$.

For each $n\in \mathbb{N}$, there exists a natural action of $G_n$ on $G\times B$, explicitly we have $$g(x,\xi):=(gx,\alpha(g)(\xi))$$
for $g\in G_n$, $x\in G$, $\xi\in B$. The orbit of $(x,\xi)$ will be denoted by $[(x,\xi)]$. Let $\pi_{n}:G\to G/G_{n}$ be a quotient map. If $\pi_{n}(a)=G_{n}a=[a]$ is an element of $G/G_n$, the action can be restricted to $G_{n}a\times B$ and we define $$B_{[a]}:=(G_{n}a\times B)/G_n$$ which means that $B_{[a]}$ is the orbit space. We can define a metric on $B_{[a]}$:$$d_{B_{[a]}}([(x,\xi)],[(x',\xi')]):=\|\xi'-\alpha(x'x^{-1})(\xi)\|_B.$$
Next, we will show that $d_{B_{[a]}}$ is well-defined. Let $x,x',x''\in G_{n}a$, $\xi,\xi',\xi''\in B$, $g,g'\in G$. Then
\begin{align*}
&d_{B_{[a]}}([(gx,\alpha(g)(\xi))],[(g'x',\alpha(g')(\xi'))])\\
=&\|\alpha(g')(\xi')-\alpha(g'x'x^{-1}g^{-1})\alpha(g)(\xi)\|_B=\|\alpha(g')(\xi')-\alpha(g'x'x^{-1})(\xi)\|_B\\
=&\|\xi'-\alpha(x'x^{-1})\|_B=d_{B_{[a]}}([(x,\xi)],[(x',\xi')]),
\end{align*}
because $\alpha(g')$ is an affine isometry. It follows that the definition of $d_{B_{[a]}}$ is independent of the choice of representative. We shall verify the basic conditions that the metric should satisfy:\\
i) $d_{B_{[a]}}\geq0$. Moreover, if $d_{B_{[a]}}([(x,\xi)],[(x',\xi')])=0$, then $\xi'=\alpha(x'x^{-1})(\xi)$. So we have $[(x',\xi')]=[(x',\alpha(x'x^{-1})(\xi))]=[(x'x^{-1}x,\alpha(x'x^{-1})(\xi))]=[(x,\xi)]$. Conversely, if $[(x,\xi)]=[(x',\xi')]$, then there exists $g\in G$, such that $g(x,\xi)=(gx,\alpha(g)(\xi))=(x',\xi')$. Thus, $\xi'=\alpha(x'x^{-1})(\xi)$, $d_{B_{[a]}}([(x,\xi)],[(x',\xi')])=\|\xi'-\alpha(x'x^{-1})(\xi)\|_B=0$.

\noindent ii) Obviously, the symmetry holds.

\noindent iii) The triangle inequality:
\begin{align*}
&d_{B_{[a]}}([(x,\xi)],[(x',\xi')])+d_{B_{[a]}}([(x',\xi')],[(x'',\xi'')])\\
=&\|\xi'-\alpha(x'x^{-1})(\xi)\|_B+\|\xi''-\alpha(x''x'^{-1})(\xi')\|_B\\
=&\|\xi''-\alpha(x''x'^{-1})(\xi')\|_B+\|\alpha(x''x'^{-1})\xi'-\alpha(x''x'^{-1})\alpha(x'x^{-1})(\xi)\|_B\\
\geq&\|\xi''-\alpha(x''x^{-1})(\xi)\|_B\\
=&d_{B_{[a]}}([(x,\xi)],[(x'',\xi'')]).
\end{align*}
In the following statement, we are going to show that $B_{[a]}$ defined above corresponds to $Y_x$ in the notion of fibred cofinitely-coarse embeddings.
For any fixed $a_{0}\in G_{n}a$, we can define a map from $B_{[a]}$ onto $B$, as follows:
\begin{equation}\label{equ1}
[(x,\xi)]\mapsto \alpha(a_{0}x^{-1})(\xi).
\end{equation}
This map is isometric. Indeed, $$\|\alpha(a_{0}x'^{-1})(\xi')-\alpha(a_{0}x^{-1})(\xi)\|_B=\|\xi'-\alpha(x'x^{-1})(\xi)\|_B=d_{B_{[a]}}([(x,\xi)],[(x',\xi')]),$$ and each $\xi\in B$ is the image of $[(a_{0},\xi)]$.

\vspace{1mm}So, we have obtained the field $(B_{[a]})_{[a]\in G/G_n}$ of metric spaces isometric to $B$. Letting\vspace{1.5mm}
$n$ vary, we get $(B_x)_{x\in \bigcup\mathscr{X}}$. For $[a]\in G/G_n$, set $s([a]):=[(a,b(a))]\in B_{[a]}$. Since $$[(ga,b(ga))]=[(ga,L(g)(b(a))+b(g))]=[(ga,\alpha(g)(b(a)))]=[(a,b(a))],$$ this is a well-defined section on $\bigcup\mathscr{X}$.

For any $r>0$. Let $n_{r}\in\mathbb{N}$ be the smallest number such that, for $n\geq n_{r}$, $B_{G}(e_{G},3r)\cap G_n ={e_G}$. Let now $K_r :=\{G/G_n :n<n_r\}$. Choose a space $(G/G_n ,d_n)\in\mathscr{X}\setminus K_r$ and a subset $C\subset G/G_n$ with diameter less than $r$. We will define a trivialization: $$t_C:\bigsqcup_{x\in C}B_x \to C\times B.$$
Let $g\in G$ be an arbitrary point (``basepoint'') of $\pi_{n}^{-1}(C)$. Then each $[a]\in C$ has a unique lift $a_{0}\in G$ such that $[a_0]=[a]$ and $d(g,a_0)<r$. Moreover, the lifting map $[a]\mapsto a_0$ is isometric on $C$. We define $t_C ([a]):B_{[a]}\to B$ to be the map given in \eqref{equ1} with respect to our choice of $a_0$. Now, for any $[a],[a']\in C$, we have
\begin{align*}
&\|t_C ([a])(s([a]))-t_C ([a'])(s([a']))\|_B\\
=&\|t_C ([a])([(a,b(a))])-t_C ([a'])([(a',b(a'))])\|_B\\
=&\|\alpha(a_0 a^{-1})(b(a))-\alpha(a'_0 a'^{-1})(b(a'))\|_B\\
=&\|L(a_0 a^{-1})(b(a))+b(a_0 a^{-1})-(L(a'_0 a'^{-1})(b(a'))+b(a'_0 a'^{-1}))\|_B\\
=&\|b(a_0 a^{-1}a_0)-b(a'_0 a'^{-1}a'_0)\|_B=\|b(a_0)-b(a'_0)\|_B\\
=&\|L(a_{0}^{-1})(b(a_0))+b(a_{0}^{-1})-(L(a_{0}^{-1})(b(a'_0))+b(a_{0}^{-1}))\|_B\\
=&\|b(a_{0}^{-1}a_0)-b(a_{0}^{-1}a'_{0})\|_B\\
=&\|b(e_G)-b(a_{0}^{-1}a'_{0})\|_B\\
=&\|b(a_{0}^{-1}a'_{0})\|_B.
\end{align*}
If we define $\rho_1 (x):=\min\{\|b(g)\|_B:g\in G,\,l(g)\geq x\}$ and $\rho_2 (x):=\max\{\|b(g)\|_B:g\in G,\,l(g)\leq x\}$ for all $x\in [0,\infty)$, then
\begin{align*}
\|t_C ([a])(s([a]))-t_C ([a'])(s([a']))\|_B=\|b(a_{0}^{-1}a'_{0})\|_B\in&[\rho_1 (d(a_0,a'_0)),\rho_2 (d(a_0,a'_0))]\\
=&[\rho_1 (d_n ([a],[a'])),\rho_2 (d_n ([a],[a']))].
\end{align*}
Note that the controlling functions do not depend on $n\in \mathbb{N}$. The condition $\lim\limits_{r\to \infty}\rho_1 (r)=\infty$ is satisfied due to the fact that the action of $G$ on $B$ is proper. In the trivial situation when $G$ is finite, $\rho_1$ needs a slight modification to avoid taking the value $\infty$, because the set $\{\|b(g)\|_B:g\in G,\,l(g)\geq x\}$ is empty for large $x$. However, this is not a significant problem.

It only suffices to show that, whenever $C_1,C_2 \subset G/G_n$ have diameters less than $r$ and $[a]\in C_1 \cap C_2$, the corresponding trivializations $t_{C_1} ([a])$, $t_{C_2} ([a])$ differ only by an isometric map $t_{C_1} ([a])\circ t_{C_2}^{-1} ([a]):B\to B$. Indeed, there exist $g_0 \in G$, $g_1 ,g_2 ,\in\pi_{n}^{-1}(C_1 \cap C_2)$ such that $\pi_{n}(g_1)=\pi_{n}(g_2)\in C_1 \cap C_2$ and $g_1 =g_0 g_2$. Let $g_1,\,g_2$ be ``basepoints" and $a_1,a_2$ corresponding lifts of $[a]$. Then, for any $\xi\in B$, we have
$$(t_{C_1} ([a])\circ t_{C_2}^{-1} ([a]))(\xi)=t_{C_1} ([a])([(a_2,\xi)])=\alpha(a_1 a_{2}^{-1})(\xi)=\alpha(g_0)(\xi),$$
because each $\xi\in B$ is the image of $[(a_2,\xi)]$. By assumption, the map $\alpha(g_0):B\to B$ is an isometry, which completes the proof.

\end{proof}

By Proposition \ref{pro1}$\,\,$and \ref{pro2}, we obtain a sufficient and necessary condition for a residually amenable group to admits a proper isometric affine action on some uniformly convex Banach space. That is, Theorem \ref{th1} is proved.

%%%%%%%%%%%%%%%%%%%%%%%%%%%%%%%%%%%%%%%%%%%%%%%%%%%%%%%%%%%%%%%%%%%%%%%%%%%%%%%%%%%%

\vskip 1cm

\noindent \noindent Guoqiang Li\\
College of Mathematics and Statistics,\\
Chongqing University (at Huxi Campus),\\
Chongqing 401331, P. R. China\\
E-mail: \url{guoqiangli@cqu.edu.cn}\\

\noindent \noindent Xianjin Wang\\
College of Mathematics and Statistics,\\
Chongqing University (at Huxi Campus),\\
Chongqing 401331, P. R. China\\
E-mail: \url{xianjinwang@cqu.edu.cn}\\
\end{document}